\newtheorem{prop}{Proposition}
\theoremstyle{remark}
\begin{document}

\title{On domination perfect graphs}
\date{}
\maketitle

\vspace{-20mm}
\begin{center}
\textsc{Jerzy Topp and Pawe\l{}~\.Zyli\'nski}\\[1mm]
University of Gda\'{n}sk, 80-308 Gda\'nsk, Poland\\
{\small \texttt{\{j.topp,zylinski\}@inf.ug.edu.pl}}
\end{center}

\begin{abstract} Let $\gamma(G)$ and $\beta(G)$ denote the domination number and the covering number of a graph $G$, respectively. A connected non-trivial graph $G$ is said to be $\gamma\beta$-{perfect} if $\gamma(H)=\beta(H)$ for every non-trivial induced connected subgraph $H$ of $G$. In this note we present an elementary proof of a characterization of the $\gamma\beta$-perfect graphs. \end{abstract}

{\small \textbf{Keywords:} domination number; covering number; perfect graph.} \\
\indent {\small \textbf{AMS subject classification:} 05C69.}

\bigskip
\noindent In this note, we follow the notation of~\cite{CLZ15}.
In particular, a subset $D \subseteq V_G$ is a~{\em dominating set} of a~graph $G=(V_G,E_G)$ if each vertex belonging to the set $V_G -D$ has a~neighbor in $D$.
The cardinality of a~minimum dominating set of $G$ is called the~{\em domination number of $G$} and is denoted by $\gamma(G)$. A subset $C \subseteq V_G$ is a~{\em vertex cover} of $G$ if each edge of $G$ has an end-vertex in $C$. (Note that in \cite{AJBT13} a vertex cover is called a~{\em transversal\/} of $G$.) The cardinality of a~minimum vertex cover of $G$ is called the~{\em covering number} of $G$ and is denoted by $\beta(G)$. A connected non-trivial graph $G$ is said to be $\gamma\beta$-{\em perfect\/} if $\gamma(H)=\beta(H)$ for every non-trivial induced connected subgraph $H$ of $G$. Such graphs have been studied in \cite{AJBT13} and ~\cite{DLSZ16}. In this note we compose Theorem 3.9 in \cite{AJBT13} with Theorem 9 in \cite{DLSZ16} and present an elementary proof of the unified result.

We start with two assertions, then give a characterization of the $\gamma\beta$-perfect graphs.

\begin{prop}
\label{Proposition 1} {\rm\cite{DLSZ16}} Every non-trivial tree of diameter at most four and every non-trivial connected subgraph of $K_{2,n}$ is a $\gamma\beta$-perfect graph, while no one of the graphs $C_3$, $C_5$ and $P_6$ is a $\gamma\beta$-perfect graph. \end{prop}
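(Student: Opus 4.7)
The proposition splits into three independent pieces, each handled by direct case analysis.

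For non-trivial trees of diameter at most four, I would first observe that every connected induced subgraph of a tree $T$ is itself a subtree of $T$, whose diameter is bounded by $\operatorname{diam}(T)$; hence it suffices to prove $\gamma(T)=\beta(T)$ for every non-trivial tree $T$ with $\operatorname{diam}(T)\le 4$. The cases $\operatorname{diam}(T)\le 2$ reduce to stars (with $\gamma=\beta=1$), and $\operatorname{diam}(T)=3$ to double stars (with $\gamma=\beta=2$, the two centres realising both optima and the two pendant edges at opposite ends of a diametral path giving the lower bound). The substantive case is $\operatorname{diam}(T)=4$: here $T$ has a unique central vertex $c$, with non-leaf neighbours $S$ and leaf neighbours $L$. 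I would verify that $S\cup\{c\}$ is simultaneously a dominating set and a vertex cover, of size $|S|+1$ when $L\ne\emptyset$ (and $S$ alone suffices when $L=\emptyset$). The matching lower bound comes from the pendant edges $v\ell_v$ for $v\in S$ (with $\ell_v$ any leaf child of $v$), together with an edge $cu$ for some $u\in L$ when $L\ne\emptyset$; these form a matching of size $|S|+[L\ne\emptyset]$, forcing both $\beta(T)$ and, via the private-neighbour argument, $\gamma(T)$ to be at least $|S|+[L\ne\emptyset]$.

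For the $K_{2,n}$ claim, every induced subgraph of $K_{2,n}$ is a complete bipartite graph $K_{a,b}$ with $a\le 2$; the non-trivial connected possibilities are $K_{1,b}$ for $b\ge 1$ (where $\gamma=\beta=1$) and $K_{2,b}$ for $b\ge 1$ (where $\gamma=\beta=1$ for $b=1$, and $\gamma=\beta=2$ for $b\ge 2$: the two vertices on the small side are both a dominating set and a vertex cover, while the matching $\{u_1w_1,u_2w_2\}$ and the absence of a universal vertex give the respective lower bounds).

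For the obstructions, one computes directly that $(\gamma(C_3),\beta(C_3))=(1,2)$, $(\gamma(C_5),\beta(C_5))=(2,3)$ and $(\gamma(P_6),\beta(P_6))=(2,3)$, so none of these graphs is $\gamma\beta$-perfect. The only step that needs any real thought is the lower bound $\gamma(T),\beta(T)\ge |S|+[L\ne\emptyset]$ in the diameter-$4$ tree case; the rest is bookkeeping.
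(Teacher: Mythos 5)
The paper itself does not prove this proposition---it is quoted from \cite{DLSZ16}---so your argument has to stand on its own. The tree part and the obstruction computations are correct: reducing to ``every non-trivial tree of diameter at most $4$ satisfies $\gamma=\beta$'' is legitimate because connected induced subgraphs of a tree are isometric subtrees, and your pairing of the upper bound $S\cup\{c\}$ against the pairwise disjoint closed neighbourhoods $\{v,\ell_v\}$ for $v\in S$ and $\{c,u\}$ for $u\in L$ settles the diameter-$4$ case; the values $(\gamma,\beta)=(1,2)$, $(2,3)$, $(2,3)$ for $C_3$, $C_5$, $P_6$ are right.

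The gap is in the $K_{2,n}$ part. The proposition asserts that every non-trivial connected \emph{subgraph} $G$ of $K_{2,n}$ is $\gamma\beta$-perfect, and the main theorem genuinely needs this: in the proof of $(3)\Rightarrow(1)$ the graph $G$ is only shown to be a subgraph, not an induced subgraph, of $K_{2,n}$. A connected induced subgraph $H$ of such a $G$ is then an arbitrary connected subgraph of $K_{2,n}$, so your case analysis ``every induced subgraph of $K_{2,n}$ is a $K_{a,b}$ with $a\le 2$'' does not cover it. For instance, $K_{2,3}$ minus one edge is a connected subgraph of $K_{2,3}$ that is neither complete bipartite nor a tree, and your argument never verifies $\gamma=\beta$ for it. The repair is short but must be said: let $H$ be a non-trivial connected subgraph of $K_{2,n}$ with sides $\{u_1,u_2\}$ and $W$. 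If $H$ misses one of $u_1,u_2$, it is a star and $\gamma(H)=\beta(H)=1$. If $H$ contains both, then $\{u_1,u_2\}$ is simultaneously a dominating set (connectivity forces every vertex of $W\cap V_H$ to retain a neighbour in $\{u_1,u_2\}$) and a vertex cover (every edge meets $\{u_1,u_2\}$), so $\gamma(H),\beta(H)\le 2$; moreover $\gamma(H)=1$ would require a universal vertex, which, since $u_1$ and $u_2$ are non-adjacent, must lie in $W$, forcing $H=P_3$ and hence $\beta(H)=1$ too. Thus $\gamma(H)=\beta(H)$ in all cases.
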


\begin{prop}
\label{Proposition 2}  If $F$ is a connected spanning subgraph of a graph $H$ of order at least three and $\gamma(F)<\beta(F)$, then $\gamma(H)<\beta(H)$ and, therefore, $H$ is not a $\gamma\beta$-perfect graph.
\end{prop}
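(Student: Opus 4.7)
The plan is to observe that, for a spanning subgraph relation, the two parameters $\gamma$ and $\beta$ behave monotonically in opposite directions, and that these monotonicities immediately chain into the desired strict inequality.

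First I would record the two elementary monotonicity facts. Since $F$ is a spanning subgraph of $H$, we have $V_F=V_H$ and $E_F\subseteq E_H$. If $D$ is a minimum dominating set of $F$, then every vertex in $V_F\setminus D$ has a neighbor in $D$ via an edge of $F$, hence also via an edge of $H$, so $D$ is dominating in $H$. This gives $\gamma(H)\le\gamma(F)$. Dually, if $C$ is a minimum vertex cover of $H$, then $C$ covers every edge of $H$, and therefore in particular every edge of $F$, so $C$ is a vertex cover of $F$, giving $\beta(F)\le\beta(H)$.

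Next, I would just chain these with the hypothesis:
\[
\gamma(H)\le\gamma(F)<\beta(F)\le\beta(H),
\]
which yields $\gamma(H)<\beta(H)$, the first claim of the proposition. For the second claim, note that $H$ itself is an induced connected subgraph of $H$, and it is non-trivial because it has order at least three; since $\gamma(H)\neq\beta(H)$, the defining condition of $\gamma\beta$-perfectness already fails on $H$ itself, so $H$ is not $\gamma\beta$-perfect.

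There is no real obstacle here: the entire argument rests on the two one-line monotonicity observations, and the order-at-least-three hypothesis is used only to ensure that $H$ qualifies as a non-trivial induced connected subgraph of itself (it also guarantees that we are not in degenerate regimes where $\beta$ or $\gamma$ behaves anomalously, e.g.\ on $K_2$).
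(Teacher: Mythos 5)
Your proof is correct and follows essentially the same route as the paper: the two monotonicity observations ($\gamma(H)\le\gamma(F)$ because a dominating set of the spanning subgraph $F$ dominates $H$, and $\beta(F)\le\beta(H)$ because a vertex cover of $H$ covers $F$), chained with the hypothesis $\gamma(F)<\beta(F)$. Your extra remark that $H$ is itself a non-trivial induced connected subgraph witnessing the failure of $\gamma\beta$-perfectness is a correct and slightly more explicit rendering of the paper's final sentence.
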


\begin{proof} Since a dominating set of $F$ is a dominating set of $H$, we have $\gamma(H)\le \gamma(F)$. Similarly, $\beta(F)\le \beta(H)$, since a vertex cover of $H$ is a vertex cover of $F$. Consequently, $\gamma(H)\le \gamma(F)< \beta(F)\le \beta(H)$ and $H$ is not a $\gamma\beta$-perfect graph. \end{proof}

\newpage
\noindent{\bf Theorem.} {\em The following statements are equivalent for a non-trivial connected graph~$G$:
\begin{itemize}
\item[$(1)$] $G$ is a tree of diameter at most four or $G$ is a connected subgraph of $K_{2,n}$.
    \item[$(2)$] $G$ is a $\gamma\beta$-perfect graph.
    \item[$(3)$] $G\not=C_5$ and neither $C_3$ nor $P_6$ is a subgraph of $G$.
\end{itemize} }

\begin{proof} The implication $(1)\Rightarrow (2)$ is obvious from Proposition \ref{Proposition 1}. Assume that $G$ is a~$\gamma\beta$-per\-fect graph. Then, by Proposition~\ref{Proposition 1}, no one of the graphs $C_3$, $C_5$ and $P_6$ is an induced subgraph of $G$. Consequently, $G\not= C_5$ and $C_3$ is not a~subgraph of~$G$. We claim that also $P_6$ is not a subgraph of~$G$.
%
%
Otherwise $P_6$ is a spanning subgraph of some $6$-vertex induced subgraph $H$ of in $G$. Then, since $\gamma(P_6)<\beta(P_6)$, we have  $\gamma(H)<\beta(H)$ (by Proposition~\ref{Proposition 2}), which contradicts the premise that $G$ is $\gamma\beta$-perfect. This proves the implication $(2)\Rightarrow (3)$.
To prove $(3)\Rightarrow (1)$, assume that $G\not= C_5$ and neither $C_3$ nor $P_6$ is a subgraph of $G$.  If $G$ is a tree, then, since $P_6$ is not a~subgraph of $G$, $G$ is of diameter at most 4. Thus assume that $G$ has a cycle, say $C$. Since $G\not=C_5$, the absence of $C_3$ and $P_6$ in $G$ guarantees that $C$ is a~chordless 4-cycle. If $G=C$, then $G=K_{2,2}$. Thus assume that the cycle $C$ is a proper subgraph of $G$. Let $v_1, v_2, v_3, v_4$ be the consecutive vertices of $C$. We may assume without loss of generality that $d_G(v_1)>2$. This time from the absence of $C_3$ and $P_6$ in $G$ it follows that $d_G(v_2)=d_G(v_4)=2$. Now, since $G$ is connected and $P_6$ is not a subgraph of $G$, $N_G(v)\subseteq \{v_1, v_3\}$ for every vertex $v$ belonging to $V_G-\{v_1, v_2, v_3, v_4\}$. Consequently, $V_G-\{v_1, v_3\}$ is independent and $G$ is a subgraph of the complete bipartite graph $K_{2,n}$, where $n=|V_G-\{v_1, v_3\}|$. \end{proof}


\begin{thebibliography}{9}
\bibitem{AJBT13} S.~Arumugam, B.K.~Jose, C.~Bujt\'as, and Z.~Tuza,
Equality of domination and transversal numbers in hypergraphs,
{\em Discrete Appl. Math.} 161 (2013), 1859-1867.

\bibitem{CLZ15} G. Chartrand, L. Lesniak, P. Zhang, Graphs and Digraphs. CRC Press, Taylor and Francis Group, Boca Raton, 2016.

\bibitem{DLSZ16}
M.~Dettlaff, M.~Lema\'nska, G.~ Semani\v{s}in, and R.~Zuazua, Some variations of perfect graphs,  {\em Discuss. Math. Graph Theory} 36\,(3) (2016), 661-668.

\end{thebibliography}
\end{document}